\newtheorem{df}{Definition}[section]
\newtheorem{thm}[df]{Theorem}
\newtheorem*{ass}{Assumption}
\newtheorem{lem}[df]{Lemma}
\begin{document}
\title[Edge based Schwarz methods]{Edge based Schwarz methods for the Crouzeix-Raviart finite volume element discretization of elliptic problems}

\author{Atle Loneland}\address{Department of Informatics, University of Bergen, 5020 Bergen, Norway}
\author{Leszek Marcinkowski}\address{Faculty of Mathematics, University of Warsaw, Banacha 2, 02-097 Warszawa, Poland.} \thanks{This work was partially supported by Polish Scientific Grant 2011/01/B/ST1/01179.} 
\author{Talal Rahman}\address{Department of Computing, Mathematics and Physics, Bergen University College, Norway}
\begin{abstract}
 In this paper, we present two variants of the Additive Schwarz Method for a Crouzeix-Raviart finite volume element (CRFVE) discretization of second order elliptic problems with discontinuous coefficients where the discontinuities are only across subdomain boundaries. One preconditioner is symmetric while the other is nonsymmetric. The proposed methods are almost optimal, in the sense that the residual error estimates for the GMRES iteration in the both cases depend only polylogarithmically on the mesh parameters.
\end{abstract}
\maketitle
\section{Introduction}
In this paper, we introduce two variants of the Additive Schwarz Method (ASM) for a Crouzeix-Raviart finite volume element (CRFVE) discretization of a second order elliptic problem with discontinuous coefficients, where the discontinuities are only across subdomain boundaries. Problems of this type play a crucial part in the field of scientific computation, for example, simulation of fluid flow in porous media are often affected by discontinuities in the permeability of the porous media. Discontinuities or jumps in the coefficient causes the performance of standard iterative methods to deteriorate as the discontinuities or the jumps increases. The resulting system, which in general is nonsymmetric, is solved using the preconditioned GMRES method, where in one variant of the ASM the preconditioner is symmetric while in the other variant it is nonsymmetric. The proposed methods are almost optimal, in the sense that the residual error estimates for the GMRES iteration, in the both cases, depend only polylogarithmically on the mesh parameters. 

The finite volume method divides the domain into control volumes where the nodes from the finite difference or finite element is located in the centroid of the control volume. Unlike the finite difference and the finite element method, the solution to the finite volume method satisfies conservation of certain quantities such as mass, momentum, energy and species. This property is exactly satisfied for every control volume in the domain and also for the whole computational domain. An attractive feature of this method is that it is directly connected to the physics of the system. There are two types of finite volume methods: One which is based on finite difference discretization, called the finite volume method  and one that is based on finite element discretization named the finite volume element (FVE) method. In the later the approximation of the solution is sought in a finite element space and can therefor be considered as an Petrov-Galerkin finite element method.

In the CRFVE method which is the discretization method we consider in this paper, the equations are discretized on a mesh dual to a primal mesh where the nonconforming Crouzeix-Raviart finite element space is defined, i.e., the space in which we seek the approximation of the solution, cf. \cite{Chatzipantelidis:1999:CVM}.

There are many results concerning Additive Schwarz Methods (ASM) for solving the symmetric system arising from finite element discretization of a model elliptic second order problems, cf. e.g. 
\cite{Toselli:2005:DDM}, but  only a few papers consider the FVE discretization based on the standard finite element space, cf. \cite{Chou:2003:ADD,Zhang:2006:ODD,Loneland:2013:AVS}.
There is  also a number of results focused  on iterative methods for the CR finite element for second order problems; cf. \cite{Brenner:1996:TLS,Marcinkowski:2008:NNA,Rahman:2005:ASM, Sarkis:1997:NCS}.

The purpose of this paper is to construct  two  parallel algorithms based on edge based discrete space decomposition in the ASM abstract scheme. This type of decomposition is the same as the one considered in \cite{Marcinkowski:2005:ADD} for a mortar type of discretization. Both methods are based on the same decomposition of the discrete space but the first one is symmetric while the second one is nonsymmetric. The algorithms are equivalent to apply parallel  ASM type of preconditioners to our CRFVE discrete problems.

We present almost optimal error bounds for the estimate of the convergence rate of GMRES method applied to our preconditioned problems, showing that the constants in the estimates  grows like $C(1+\log(H/h))^2$, where $H$ is the maximal diameter of the subdomains and $h$ is the fine mesh size parameter.
  
  For notational convenient we introduce the following notation: For positive constants $c$ and $C$ independent of $h$ we define $u\asymp v$, $x\succeq y$ and $w\preceq z$ as 
 $$cu\leq v\leq Cu,\qquad x\geq cy\qquad\text{and }w\leq Cz,\text{ respectively.}$$
 $u,v,x,y,w$ and $z$ are here norms of some functions.
\section{Prelimenaries}
\subsection{The Model Problem}
We consider the following elliptic boundary value problem
\begin{eqnarray}
\label{eq:modelproblem}
-\nabla\cdot(A(x)\nabla u)&=&f \hspace{15 mm} \mathrm{ in }\; \Omega,\\ \nonumber
u&=&0 \hspace{15 mm} \mathrm{on}\; \partial\Omega.
\end{eqnarray}
Where $\Omega$ is a bounded convex domain in $\mathbb{R}^2$ and $f\in L^2(\Omega)$. 

The corresponding standard variational (weak) formulation is: Find $u^* \in H^1_0(\Omega)$  such that
$$
 a(u^*,v)=\int_\Omega f v \:dx \quad \forall  v \in H^1_0(\Omega),
$$
where 
$$
 a(u,v)= \sum_{k=1}^N  \int_{\Omega_k} 
\nabla u^T A(x)\nabla v
              \: dx.
$$

Now, we partition $\Omega$ into a nonoverlapping subdomains consisting of open, connected Lipschitz polytopes $\Omega_i$ such that 
$\overline{\Omega}=\bigcup_{i=1}^N \overline{\Omega}_i\,.$ We also assume that these subdomains form a coarse triangulation of the domain which is shape regular as in  \cite{Brenner:1999:BDD} with $H=\max_k H_k$, where $H_k=\text{diam }\Omega_k$.

We assume that the restriction of the symmetric coefficient  matrix to $\Omega_k$: $A_k=A_{|\Omega_k}$   is in $W^{1,\infty}(\Omega_k)$ 
and  bounded and positive definite, i.e. 
\begin{eqnarray}
  \exists \alpha_k>0 \; \forall x\in \Omega_k \;\forall \xi\in {\mathbb R}^2 
\quad \xi^TA(x)\xi&\geq& \alpha_k |\xi|^2\\
 \exists M_k>0 \; \forall x\in \Omega_k \; \forall \xi,\mu\in {\mathbb R}^2 \quad \mu^TA(x)\xi&\leq& M_k |\nu| |\xi|.
\end{eqnarray}
Here $|\xi|=\sqrt{\xi^T\xi}$.
We can always scale the matrix functions  $A$ in such a way that all $\alpha_k\geq 1$.
Thus we assume that the restriction of the coefficient  matrix to $\Omega_k$: $A_k=A_{|\Omega_k}$   is in $W^{1,\infty}(\Omega_k)$ with the following bounds: 
$  \|A_k\|_{W^{1,\infty}(\Omega_k)}\leq C$, and 
  $ M_k \leq C \alpha_k$, i.e. we assume that the coefficient matrix locally is smooth, isotropic  and not too much varying.
\subsection{Basic notation}
Throughout this paper we will use the following notation for Sobolev spaces. The space of functions that have generalized derivatives of order s in the space $L^{2}(\Omega)$ is denoted as $H^s(\Omega)$. The norm on the space $H^s(\Omega)$ is defined by 
$$\|u\|_{H^s(\Omega)}=\left(\int_\Omega\sum_{|\alpha|\leq s}|D^\alpha u|^2\,dx\right)^{1/2}.$$
The space of functions with bounded weak derivatives of order $s$ is denoted by $W^{s,\infty}(\Omega)$ with the corresponding norm defined as 
$$\|u\|_{W^{s,\infty}(\Omega)}=\max_{0\leq |\alpha|\leq s}\|D^\alpha u\|_{L^2(\Omega)}.$$
The subspace of $H^1(\Omega)$, with functions vanishing on the boundary $\partial\Omega$ in the sense of traces, is denoted by $H^1_0(\Omega)$. For the duality pairing between $H^{-1}(\Omega)$ and $H^1_0(\Omega)$, we denote by $(f,u)$ the action of a functional $f\in H^{-1}(\Omega)$ on a function $u\in H^1_0(\Omega)$.

We introduce a global interface $\Gamma=\bigcup_i \overline{
  \partial \Omega_i \setminus \partial \Omega}$ which plays an important role in our study.

We assume that there exists a sequence of  quasiuniform triangulations: $\mathcal{T}_h=\mathcal{T}_h(\Omega)=\{\tau\}$, of $\Omega$ such that  any element $\tau$ of $\mathcal{T}_h$ is contained in only one subdomain, as a consequence any subdomain $\Omega_k$ inherits  a sequence of local triangulations:
$\mathcal{T}_h(\Omega_k)=\{\tau\}_{\tau\subset \Omega_k,\tau\in \mathcal{T}_h}$.
With this triangulation $\mathcal{T}_h(\Omega)$ we define the broken $H^1(\Omega)$ norm and seminorm as

\begin{eqnarray*}
 \|v\|_{H_h^1(\Omega)}=\left(\sum_{\tau\in\mathcal{T}_h(\Omega)}\|v\|^2_{H^1(\tau)}\right)^{1/2},\quad |v|_{H_h^1(\Omega)}=\left(\sum_{\tau\in\mathcal{T}_h(\Omega)}|v|^2_{H^1(\tau)}\right)^{1/2}.
\end{eqnarray*}

\begin{figure}[htb]
\centering
\includegraphics[width=0.4\textwidth,height=0.17\textheight]{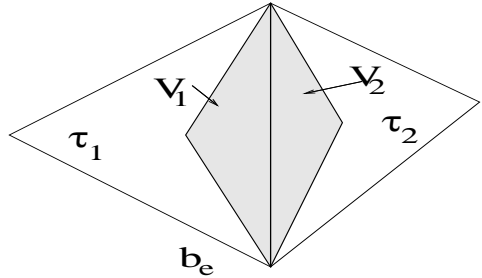}
 \caption{The control volume $b_e$ for an edge $e$ which is the common edge to the triangles $\tau_1$ and $\tau_2$.}  \label{fig:2}
\end{figure}
Let $h=\max_{\tau \in \mathcal{T}_h(\Omega)} \mathrm{diam}(\tau)$ be the mesh size parameter of the triangulation. 
We introduce the following sets of  Crouzeix-Raviart (CR) nodal points or nodes: let  $\Omega_h^{CR}, \partial \Omega_h^{CR},
\Omega_{k,h}^{CR}, \partial \Omega_{k,h}^{CR}$, $\Gamma_h^{CR}$, and $\Gamma_{kl,h}^{CR}$
be the midpoints of edges of elements in $\mathcal{T}_h$ which are on  $\Omega, \partial\Omega, \Omega_k, \partial\Omega_k$, $\Gamma$, and $\Gamma_{k l}$, respectively.
Here $\Gamma_{kl}$ is an interface, an open edge, which is shared by the two subdomains, $\Omega_k$ and $\Omega_l$. Note that 
$
 \Gamma_h^{CR}=\bigcup_{\Gamma_{k l}\subset \Gamma} \Gamma_{kl,h}^{CR}.
$
\begin{figure}[htb]
\centering
\includegraphics[width=0.2\textwidth]{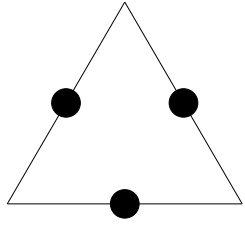}
 \caption{The degrees of freedom of the nonconforming Crouzeix-Raviart element.}  \label{fig:1}
\end{figure}
Now we define a dual triangulation $\mathcal{T}_h^*$ to the initial one.
For an  edge $e$ of an element not on $\partial\Omega$, i.e., a common edge $e$ for two elements $\tau_1$ and $\tau_2$, $e$ is defined as $e=\partial \tau_1 \cap \partial \tau_2$. We now introduce two triangles: $V_k\subset \tau_k$ obtained  by connecting the ends of $e$ to the centroid (barycenter) of $\tau_k$ for $k=1,2$. Then, let the  control volume   $b_e=V_1\cup e \cup V_2$, cf.~Figure~\ref{fig:2}.   For an edge of an element $\tau$ contained in $\partial \Omega$ let the control volume be the triangle $V$ obtained analogously i.e. by connecting the ends of $e$ with the centroid of $\tau$.
Then let $\mathcal{T}_h^*=\{b_e\}_{e\in E_h}$, where $E_h$ is the set of all edges of elements in $\mathcal{T}_h$.
\subsection{Discrete problem}
In this section we present the Crouzeix-Raviart finite element (CRFE) and finite volume (CRFV) discretizations of a model second order elliptic problem with discontinuous coefficients across prescribed substructures boundaries. We define the two discrete spaces mentioned above as:
\begin{eqnarray*}
 V_h &:=& \{u\in L^2(\Omega): 
                v_{|\tau}\in P_1, \quad  \tau \in \mathcal{T}_h \quad 
                v(m)=0\quad  m \in \partial\Omega_h^{CR} 
               \}, \\
 V_h^* &:=& \{u\in L^2(\Omega): 
                v_{|b_e}\in P_0,   \quad  b_e \in \mathcal{T}_h^* \quad 
                v(m)=0\quad m \in \partial\Omega_h^{CR} 
               \} .
\end{eqnarray*}
The first space is the classical nonconforming Crouzeix-Raviart finite element space, cf.~Figure~\ref{fig:1}, and the second space is the space of piecewise constant functions which are zero on the boundary of the domain. Both spaces are contained in $L^2(\Omega)$.

Let $\{\phi_m\}_{m\in \Omega_h^{CR}}$ be the standard CR nodal basis of $V^h$ and 
 $\{\psi_m\}_{m\in \Omega_h^{CR}}$ be the standard basis of $V_h^*$ 
consisting of characteristic functions of the control volumes.

We also introduce two interpolation operators, $I_h$ and $I_h^*$, defined for any function that has properly defined and unique values at each midpoint $m \in  \Omega_h^{CR}$:
\begin{eqnarray*}
 I_h(u)=\sum_{m \in \Omega_h^{CR}} u(m) \phi_m, \qquad 
  I_h^*(u)=\sum_{m \in \Omega_h^{CR}} u(m) \psi_m.
\end{eqnarray*}

Note that $I_hI_h^*u=u$ for any $u\in V_h$ and $I_h^*I_h u=u$ for any $u\in V_h^*$.
Now we define a nonsymmetric in  general bilinear form $a_h:V_h\times V_h^*\rightarrow {\mathbb R}$:
\begin{eqnarray}
  a_h^{CRFV}(u,v)=- \sum_{e\in E_h^{in}} v(m_e)\int_{\partial b_e}A(s)\nabla u\cdot\mathbf{n}  \: ds ,
\end{eqnarray}
where $\textbf{n}$ is a normal unit vector  outer to $\partial b_e$, $m_e$ is the median (midpoint)  of the edge $e$ and  $E_h^{in}\subset E_h$ is the set of all interior  edges, i.e. those which are not on $\partial \Omega$.

Then our discrete CRFV problem  is to find $u_h^{FV} \in V_h$ such that:
\begin{equation}
\label{eq:disc_pr}
  a_h^{FV}(u_h^{FV},v)=f(I_h^* v) \qquad \forall v \in V_h
\end{equation}
for $a_h^{FV}(u,v):=a_h^{CRFV}(u,I_h^*v)$.
In general this problem is nonsymmetric unless the coefficients matrix is a piecewise constant matrix over each element  $\tau\in \mathcal{T}_h(\Omega)$. 
One can prove that there exists $h_0>0$ such that for all $h\leq h_0$  the form $a_h^{FV}(u,v)$ is positive definite over $V_h$. Thus  this problem has a unique solution. Some  error estimates are also proven, cf. \cite{Loneland:2013:AVS} or \cite{Chatzipantelidis:1999:CVM} in the case of the smooth coefficients. 

The corresponding symmetric nonconforming finite element problem is defined as: Find $u^{FE}\in V_h$ such that: 
\begin{equation}
\label{eq:disc_pr2}
 a_h(u_h^{FE},v)=\left(f,v\right),\qquad v\in V_h.
\end{equation} 
The bilinear form $a(\cdot,\cdot)$ also induces the so called energy norm which is defined as $\|\cdot\|_a=\sqrt{a(\cdot,\cdot)}$.

The next lemma is crucial for the analysis of our method. It relates the CRFV and CRFE bilinear forms. The proof for the type of problems under consideration in this paper can be found in \cite{Loneland:2013:AVS}.
\begin{lem} For the bilinear forms $a^{FE}(u,v)$ and $a^{FV}(u,v)$ there exists $h_0>0$ such that the following holds
\label{lem:conv}
 \begin{eqnarray}
  |a_h^{FE}(u,v)-a_h^{FV}(u,I_h^*v)|\preceq h\|u\|_a\|v\|_a,\qquad \forall u,v\in V_h.
 \end{eqnarray}

\end{lem}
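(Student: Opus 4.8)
The plan is to prove the estimate element by element, exploiting that the finite element and finite volume forms differ only through the variation of $A$ inside each element. Writing $a_h^{FE}(u,v)=\sum_{\tau\in\mathcal{T}_h(\Omega)}\int_\tau \nabla u^T A\nabla v\,dx$ and collecting the flux contributions of $a_h^{FV}(u,I_h^*v)=-\sum_{e\in E_h^{in}}v(m_e)\int_{\partial b_e}A\nabla u\cdot\mathbf{n}\,ds$ element by element (note that $I_h^*$ preserves midpoint values, so the weight on $b_e$ is $v(m_e)$), the difference splits as a sum $\sum_\tau D_\tau(u,v)$, and it suffices to bound each $D_\tau$ and then reassemble. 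On a fixed $\tau$ both $\nabla u$ and $\nabla v$ are constant vectors $g_\tau$ and $r_\tau$, so the finite element contribution equals $|\tau|\,g_\tau^T\bar A_\tau r_\tau$, where $\bar A_\tau=\frac1{|\tau|}\int_\tau A\,dx$; in particular the finite element part sees $A$ only through its elementwise mean $\bar A_\tau$.

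The first key step is the exact identity that, for a coefficient constant on $\tau$, the Crouzeix--Raviart finite element and finite volume contributions coincide. This is the structural property underlying the method, and it follows from the barycentric construction of the control volumes: for a constant field $\mathbf{F}$ the flux $\int_\gamma \mathbf{F}\cdot\mathbf{n}\,ds$ across a segment depends only on its endpoints, and a direct computation over the three sub-triangles $V_{e_i}\subset\tau$ reproduces the stiffness $|\tau|\,g_\tau^T A_0 r_\tau$. Granting this identity (equivalently, the known coincidence of the two forms for coefficients that are piecewise constant per element), and since the finite element part already equals its $\bar A_\tau$-version exactly, $D_\tau$ reduces to the finite volume contribution on $\tau$ with the coefficient replaced by the oscillation $A-\bar A_\tau$.

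Next I would rewrite this finite volume contribution by reorganizing the flux integrals over the interior segments $\gamma_p$ joining each vertex $p$ of $\tau$ to its centroid. Each such $\gamma_p$ is shared by the two control volumes $b_{e_i},b_{e_j}$ whose edges meet at $p$, and the two occurrences carry opposite outer normals; summing them leaves the weight $v(m_j)-v(m_i)=r_\tau\cdot(m_j-m_i)$, which is $O(h)\,|r_\tau|$ since the two midpoints lie within distance $Ch$ of one another. (Boundary edges cause no trouble, because $v$ vanishes at their midpoints.) Combining $\|A-\bar A_\tau\|_{L^\infty(\tau)}\le \mathrm{diam}(\tau)\,\|\nabla A\|_{L^\infty(\tau)}\preceq h$ — here the hypothesis $A_{|\Omega_k}\in W^{1,\infty}(\Omega_k)$ with a uniform bound is essential — with $|\gamma_p|\preceq h$ gives $|D_\tau|\preceq h^3\,|g_\tau|\,|r_\tau|$. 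Quasiuniformity then yields $|\tau|\asymp h^2$, so $|g_\tau|\asymp |u|_{H^1(\tau)}/h$ and $|r_\tau|\asymp |v|_{H^1(\tau)}/h$, whence $|D_\tau|\preceq h\,|u|_{H^1(\tau)}\,|v|_{H^1(\tau)}$. Summing over $\tau$, applying the Cauchy--Schwarz inequality, and using the scaling $\alpha_k\ge1$ (so that $|w|_{H^1_h(\Omega)}\le\|w\|_a$) converts the broken seminorms into the energy norm and delivers the claimed bound $h\,\|u\|_a\|v\|_a$.

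The hard part I anticipate is the exact cancellation in the first step: one must verify, from the precise geometry of the dual mesh $\mathcal{T}_h^*$, that the finite volume flux reproduces the finite element stiffness for constant coefficients. Without this identity the leading $O(1)$ terms would fail to cancel and only an $O(1)$ bound would survive; it is precisely this cancellation that lets the bound pick up the full factor of $h$ from the coefficient oscillation $A-\bar A_\tau$.
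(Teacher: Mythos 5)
The paper does not prove Lemma~\ref{lem:conv} itself; it defers to \cite{Loneland:2013:AVS} (and to \cite{Chatzipantelidis:1999:CVM} for smooth coefficients), and your argument is precisely the standard one used there: elementwise comparison, exact coincidence of the two forms for coefficients constant on each element, and an $O(h)$ oscillation bound from $A_{|\Omega_k}\in W^{1,\infty}(\Omega_k)$ combined with the regrouping of fluxes along the vertex--centroid segments to turn the weights $v(m_e)$ into differences of size $O(h)|\nabla v|$. The argument is correct; the only point worth making explicit is that the constant-coefficient identity delivers $|\tau|\,g_\tau^T\bar A_\tau^T r_\tau$, so the symmetry of $A$ (assumed in the paper) is what makes it coincide with the finite element contribution $|\tau|\,g_\tau^T\bar A_\tau r_\tau$.
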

\section{The GMRES Method}
The linear system of equations which arises from problem (\ref{eq:disc_pr}) is in general nonsymmetric. We may solve such a system using a preconditioned GMRES method; cf. Saad and Schultz \cite{saad1986gmres} and Eistenstat, Elman and Schultz \cite{eisenstat1983variational}. This method has proven to be quite powerful for a large class of nonsymmetric problems. The theory originally developed for $L^2(\Omega)$ in \cite{eisenstat1983variational} can easily be extended to an arbitrary Hilbert space; see \cite{cai1989some,Cai:1992:DDA}.

In this paper, we use GMRES to solve the linear system of equations
\begin{equation}
 Tu=g,
\end{equation}
where $T$ is a nonsymmetric, nonsingular operator, $g\in V_h$ is the right hand side and $u\in V_h$ is the solution vector.

The main idea of the GMRES method is to solve a least square problem in each iteration, i.e. at step $m$ we approximate the exact solution $u^*=T^{-1}g$ by a vector $u_m\in \mathcal{K}_m$ which minimizes the norm of the residual, where $\mathcal{K}_m$ is the  $m$-th Krylov subspace defined as 
$$\mathcal{K}_m=\text{span}\left\{r_0,Tr_0,\cdots T^{m-1}r_0\right\}$$ and $r_0=g-Tu_0$.
In other words, $z_m$ solves 
\begin{equation*}
 \min_{z\in\mathcal{K}_m}\|g-T(u_0+z)\|_a.
\end{equation*}
Thus, the $m$-th iterate is $u_m=u_0+z_m$.

The convergence rate of the GMRES method is usually expressed in terms of the following two parameters
\begin{equation*}
 c_p=\inf_{u\neq0}\frac{a(Tu,u)}{\|u\|_a^2}\text{ and }C_p=\sup_{u\neq0}\frac{\|Tu\|_a}{\|u\|_a}.
\end{equation*}
The decrease of the norm of the residual in a single step is described in the next theorem.
\begin{thm}[Eisenstat-Elman,Schultz]
If $c_p>0$, then the GMRES method converges and after m steps, the norm of the residual is bounded by
\begin{equation}
 \|r_m\|_a\leq\left(1-\frac{c_p^2}{C_p^2}\right)^{m/2}\|r_0\|_a,
\end{equation}
where $r_m=g-Tu_m$.
\end{thm}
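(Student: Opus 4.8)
The plan is to exploit the defining minimization property of GMRES, which says that the $m$-th residual is the smallest, in the $\|\cdot\|_a$-norm, among all residuals obtainable from polynomials in $T$ of degree at most $m$ that are normalized to equal $1$ at the origin. First I would record that, since $z_m \in \mathcal{K}_m = \mathrm{span}\{r_0, Tr_0, \dots, T^{m-1}r_0\}$, the residual $r_m = r_0 - Tz_m$ can be written as $r_m = p_m(T)r_0$ for some polynomial $p_m$ of degree $\le m$ with $p_m(0)=1$, and moreover the least-squares characterization of $u_m$ gives
$$\|r_m\|_a = \min_{\substack{p\in\mathcal{P}_m\\ p(0)=1}} \|p(T)r_0\|_a,$$
where $\mathcal{P}_m$ denotes the polynomials of degree at most $m$.

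The central trick is then to replace this minimum over all admissible polynomials by the value attained at one convenient choice. I would take $q(t) = (1-\alpha t)^m$ for a scalar $\alpha>0$ to be fixed later; since $q(0)=1$ and $\deg q = m$, optimality yields
$$\|r_m\|_a \le \|(I-\alpha T)^m r_0\|_a \le \|I-\alpha T\|_a^{\,m}\,\|r_0\|_a,$$
reducing the whole estimate to a bound on the single operator norm $\|I-\alpha T\|_a$.

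Next I would estimate that operator norm. Because the energy norm is induced by the symmetric form $a$, for any $v$ one has the exact expansion
$$\|(I-\alpha T)v\|_a^2 = \|v\|_a^2 - 2\alpha\,a(Tv,v) + \alpha^2\|Tv\|_a^2.$$
Inserting the two defining inequalities $a(Tv,v)\ge c_p\|v\|_a^2$ and $\|Tv\|_a^2 \le C_p^2\|v\|_a^2$ bounds the right-hand side by $(1 - 2\alpha c_p + \alpha^2 C_p^2)\|v\|_a^2$. Minimizing the scalar quadratic $1 - 2\alpha c_p + \alpha^2 C_p^2$ over $\alpha$ gives the optimal choice $\alpha = c_p/C_p^2$ and the value $1 - c_p^2/C_p^2$, so that $\|I-\alpha T\|_a^2 \le 1 - c_p^2/C_p^2$. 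Combining this with the previous display produces the claimed bound once the exponent $m/2$ is accounted for.

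Finally, for the convergence claim I would check that the contraction factor is genuinely less than one: Cauchy--Schwarz gives $c_p\|v\|_a^2 \le a(Tv,v) \le \|Tv\|_a\|v\|_a \le C_p\|v\|_a^2$, hence $0 < c_p \le C_p$ whenever $c_p>0$, so $1 - c_p^2/C_p^2 \in [0,1)$ and the residual norms tend to zero. The step I expect to require the most care is the operator-norm estimate: one must use the symmetry of $a$ to obtain the clean quadratic expansion, so that the cross term is exactly $a(Tv,v)$, and then recognize that the degree-$m$ power polynomial is precisely what converts a single-step contraction into the geometric rate. The remaining manipulations are routine.
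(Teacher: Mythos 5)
Your proof is correct and complete: the polynomial characterization of the GMRES residual, the choice $p(t)=(1-\alpha t)^m$, the exact quadratic expansion of $\|(I-\alpha T)v\|_a^2$ using the symmetry of $a(\cdot,\cdot)$, and the optimization $\alpha=c_p/C_p^2$ together give exactly the stated bound, and your Cauchy--Schwarz remark that $c_p\le C_p$ guarantees the contraction factor lies in $[0,1)$. The paper itself gives no proof --- it states the theorem with attribution to Eisenstat, Elman and Schultz and cites the original references --- and your argument is precisely the standard one from that literature, so there is nothing to reconcile.
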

The two parameters describing the convergence rate of the GMRES method will be estimated in Theorem \ref{thm:mainthmfve} once the proposed domain decomposition preconditioner corresponding to the operator $T$ is defined and analyzed.

\section{Additive Schwarz Method}
In this section we introduce the additive method for the discrete problem (\ref{eq:disc_pr}) and provide bounds on the convergence rate, both for the solution of the symmetric and nonsymmetric problem following the newly developed abstract framework of \cite{marcinkowskirahman2013}. For each substructure $\Omega_k$ define the restriction of $V^h$ to $\bar\Omega_k$ and the corresponding subspace with CR zero Dirichlet boundary conditions as $$W_k:=\left\{v_{\bar\Omega_k}: v\in V_h\right\}$$ and $$W_{k,0}:=\left\{v\in W_h:v(m)=0 for m\in\partial\Omega_{k,h}^{CR}\right\},$$
respectively. Clearly $W_k\subset W_{k,0}$.
Now let $P_k: W_k\rightarrow W_{k,0}$ be the orthogonal projection of a function $u\in V^h$ onto $W_{k,0}$ defined by 
\begin{equation}
 a^{FE}_{k,h}(P_ku,v)=a^{FE}_{k,h}(u,v)\qquad \forall v\in W_{k,0},
\end{equation}
and define $H_ku=u-P_ku$ as the discrete harmonic counterpart of $u$, i.e.
\begin{eqnarray}
\label{eq:discreteharmonic}
  a_{k,h}^{FE}(H_k u,v)&=&0 \qquad  \forall v \in W_{k,0},\\
  H_ku(m)&=&u(m) \qquad m \in \partial\Omega_{k,h}^{CR}.
\end{eqnarray}
A function $u\in W_k$ is locally discrete harmonic if $H_ku=u$. If all restrictions to subdomains of a function $u\in V^h$ are locally discrete harmonics, i.e.,
$$u_{|\Omega_k}=H_k u_{|\Omega_k}\qquad \text{for } k=1,\ldots,N$$ then we say $u$ is a discrete harmonic function. 

For any function $u\in V^h$, this gives a decomposition of $u$ into locally discrete harmonic parts and local projections, i.e. $u=Hu+Pu$ where $Hu=(H_1u,\ldots,H_Nu)$ and $Pu=(P_1u,\ldots,P_Nu)$.

An important property of discrete harmonic functions is the minimal energy one. A discrete harmonic function $u=H_ku$ has minimal energy among all functions which are equal to $u$ on $\partial\Omega^{CR}_{k,h}$, i.e.
\begin{equation}
 a_k(u,u)=\min\left\{a_k(v,v):v(p)=u(p)\quad\forall p\in\partial\Omega^{CR}_{k,h}\right\}.
\end{equation}
Another important property is that the values of a discrete harmonic functions in the interior CR nodal points of subdomains are completely determined by the values on $\partial\Omega_{k,h}^{CR}$ and (\ref{eq:discreteharmonic}).

\subsection{Decomposition of $V_h(\Omega)$}
To define our additive Schwarz method  we first need to define a decomposition of the space $V_h(\Omega)$ into subspaces equipped with local bilinear forms. 

We start by defining special edge functions which we will use to build our coarse space.
\begin{df}
 \label{def:edgefunc}
 Let $\Gamma_{kl}\subset\Gamma$ be and edge and let $\theta_{kl}\in V^h$ be a discrete harmonic function defined at the CR nodal points on $\Gamma_{kl}$ as follows
 \begin{itemize}
  \item $\theta_{kl}(p)=1$ for $p\in\Gamma^{CR}_{kl,h}$,
  \item $\theta_{kl}(p)=0$ for $p\in\Gamma^{CR}_h\setminus\Gamma^{CR}_{kl,h}$.
 \end{itemize}
\end{df}
The coarse space is then defined as the span of these edge functions, i.e., $V_0=\mathrm{span}\{\theta_{kl}\}\subset V_h(\Omega)$. The support of an edge function $\theta_{kl}$ corresponding to an interface $\Gamma_{kl}$, i.e., an edge shared by the two subdomains $\Omega_k$ and $\Omega_l$, is contained in $\Omega_{k}\cup\Omega_{l}\cup\Gamma_{kl}$, cf. Figure~\ref{fig:edgefuncsupport}.
\begin{figure}[htb]
        \centering
         \includegraphics[width=\linewidth]{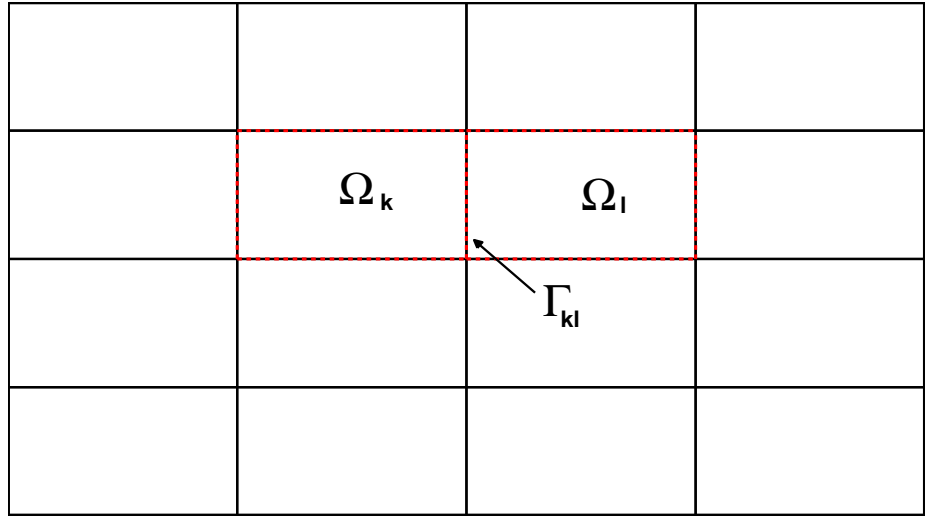}
  \caption{Support of an edge function $\theta_{kl}$ corresponding to the interface $\Gamma_{kl}$.}
\label{fig:edgefuncsupport}

\end{figure}

The local spaces corresponding to $\Gamma_{kl}$ are defined as the space of discrete harmonic functions which are nonzero only at nodal points in $\Omega_{k,h}^{CR}\cup\Omega_{l,h}^{CR}\cup\Gamma_{kl,h}^{CR}$. We define the bilinear form for these local spaces to be the original bilinear form restricted to $\Omega_k$, i.e., $a_{kl}(u,v)=a_{k,h}(u,v)$. The last set of subspaces for our decomposition is the one corresponding to the subregions $\Omega_k$. Let $V_k$ be the space $W_{k,0}$ extended by zero to all remaining subdomain. This yields the following decomposition of our discrete space $V_h(\Omega)$: $$V_h=V_0+\sum_{\Gamma_{kl}\subset\Gamma}V_{kl}+\sum_{k=1}^N V_k.$$

Now we define the symmetric and nonsymmetric projection like operators:

For $i=0,\cdots,N$ the projection operators $T_i^{sym}\colon V_h\rightarrow V_i$ for the coarse and local subdomains are defined as
\begin{equation*}
 a_h^{FE}(T^{sym}_iu,v)=a_h^{FV}(u,v)
 \qquad\forall v\in V_i(\Omega).
\end{equation*}
The projection operator $T_{kl}^{sym}\colon V_h\rightarrow V_{kl}$ associated with the edge $\Gamma_{kl}$ is defined as
\begin{equation*}
 a_h^{FE}(T^{sym}_{kl}u,v)=a_h^{FV}(u,v)
 \qquad\forall v\in V_{kl}.
\end{equation*}
Note that $T^{sym}_{kl}$ is defined as the extension with zeros to all remaining subdomains of the local projection operator $P_ku_{|\Omega_k}$ and may be computed by solving local symmetric  discrete CRFE Dirichlet problem.

The nonsymmetric operator which is based solely on the nonsymmetric bilinear form $a_h^{FV}(u,v)$ is defined completely analogously: 

For $i=0,\cdots,N$ the projection operators $T_i^{nsym}\colon V_h\rightarrow V_i$ for the coarse and local subdomains are defined as
\begin{equation*}
 a_h^{FV}(T^{nsym}_iu,v)=a_h^{FV}(u,v)
 \qquad\forall v\in V_i(\Omega).
\end{equation*}
Similarly as in the symmetric case, the edge related operator $T_{kl}^{sym}\colon V_h\rightarrow V_{kl}$ associated with the edge $\Gamma_{kl}$ is defined as
\begin{equation*}
 a_h^{FV}(T^{nsym}_{kl}u,v)=a_h^{FV}(u,v)
 \qquad\forall v\in V_{kl}.
\end{equation*}
Each of these problems have a unique solution. We now introduce 
\begin{eqnarray*}
 T^{type}:=\sum_{\Gamma_{kl}\subset\Gamma}T_{k l}^{type}+\sum_{k=0}^N T_k^{type},
\end{eqnarray*}
where the super-index $type$ is either $sym$ or $nsym$ corresponding to the symmetric and nonsymmetric operators.
This allow us to replace the original problem (\ref{eq:disc_pr}) by the equation 
\begin{eqnarray}\label{eq:precond}
  T^{type} u^{FV}_h = g^{type}.
\end{eqnarray}
where $g^{type}$ is defined as
$$g^{type}=g_0^{type} +\sum_{\Gamma_{k l} \subset \Gamma} g_{k l}^{type} + \sum_{k=1}^N g_k^{type}$$
with $g_0=T_0^{type}u^{FV}_h$,  $g_{k l}^{type}=T_{k l}^{type}u^{FV}_h $ and $g_k^{type}=T_k^{type}u^{FV}_h$ for ${type} \in\{sym,nsym\}$ .
Note that $g^{type}_i$ may be computed without knowing the solution $u^{FV}_h$ of (\ref{eq:disc_pr}).
\subsection{Analysis}
\label{sect:analysis}
Before we state the main theorem regarding the convergence rate of our proposed method we state two auxiliary lemmas without proofs which will help us analyze and estimate the parameters describing GMRES convergence rate. The proofs may be found in \cite{Marcinkowski:2005:ADD} and references therein.
\begin{lem}
\label{lem:coarsebasisbound}
 Let $\Gamma_{kl}\subset\Gamma $ be and edge and let $\theta_{kl}$ be an edge function from Definition \ref{def:edgefunc}. Then for any $u\in V_h(\Omega_i)$ we have
 \begin{eqnarray}
 \label{eq:coarsebasisbound}
  |\theta_{kl}|^2_{H^1_h(\Omega_i)}&\preceq&\left(1+\log\left(\frac{H_i}{h_i}\right)\right),\\  
  |u_{kl}|^2_{H^1_h(\Omega_i)}&\preceq&\left(1+\log\left(\frac{H_i}{h_i}\right)\right)^2(H_i^{-2}\|u\|^2_{L^2(\Omega_i)}+|u|^2_{H_h^1(\Omega_i)}),\nonumber
 \end{eqnarray}
where $u_{kl}$ is a function taking the same values as $\theta_{kl}u$ at the CR nodal points on $\partial\Omega_i$.
\end{lem}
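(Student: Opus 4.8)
The statement is a standard edge-function estimate of the kind that appears throughout the nonconforming/mortar domain-decomposition literature, so the plan is to reduce it to well-known CR finite element tools. Let me think about what the two inequalities are really saying.

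The first inequality: $|\theta_{kl}|^2_{H^1_h(\Omega_i)} \preceq (1 + \log(H_i/h_i))$. Here $\theta_{kl}$ is the discrete harmonic function that equals $1$ on the CR nodes on the edge $\Gamma_{kl}$ and $0$ on all other interface CR nodes. This is the classical "edge function has logarithmic energy" estimate.

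The second inequality is about $u_{kl}$, a function matching $\theta_{kl}u$ at CR nodes on $\partial\Omega_i$, and it bounds its seminorm by $(1+\log(H_i/h_i))^2$ times the full $H^1$-type norm of $u$ (with the $H_i^{-2}$ scaling). This is the discrete Sobolev / trace-type estimate for products with a cutoff.

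Let me sketch each.

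**First inequality.** The function $\theta_{kl}$ is supported near the edge $\Gamma_{kl}$ and is discrete harmonic in $\Omega_i$. Since it's discrete harmonic, it has minimal energy among all functions with the same boundary CR values. So I would bound $|\theta_{kl}|^2_{H^1_h(\Omega_i)}$ by the energy of ANY convenient function with the same CR-nodal values on $\Gamma$. The natural choice is a function that equals $1$ on $\Gamma_{kl}$, decays to $0$ over one layer of elements (or more carefully, a function built from the distance to the edge). The estimate $|\cdot|^2 \preceq 1+\log(H/h)$ comes from the standard computation: the energy of such an edge cutoff on a triangulated polygon behaves like a sum $\sum_{j} 1/j$ over the $O(H/h)$ layers from the edge, which is the logarithmic term. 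The key ingredient is an inverse inequality together with the explicit construction of a comparison function, and I would cite the analogous conforming result then adapt it to the CR setting (where nodal values live at edge midpoints rather than vertices). The crucial point to verify is that the CR discrete-harmonic extension doesn't introduce extra factors—it only lowers the energy, so the one-sided bound suffices.

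**Second inequality.** This is the product/cutoff estimate. The function $u_{kl}$ agrees with $\theta_{kl}u$ at the CR nodes on $\partial\Omega_i$. The idea is that multiplying $u$ by the edge cutoff $\theta_{kl}$ localizes $u$ to the edge, and the seminorm of this localized function is controlled by a discrete Sobolev inequality. Concretely, I would write $u_{kl}$ as (the interpolant of) $\theta_{kl} u$ and use a discrete product rule: on each element, $\nabla(\theta_{kl}u) = \theta_{kl}\nabla u + u\nabla\theta_{kl}$, so the seminorm splits into a term controlled by $|u|_{H^1_h}$ (since $\theta_{kl}$ is bounded) and a term $\|u \nabla\theta_{kl}\|$. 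The second term is where the logarithmic factors come from: I would bound $\|u\nabla\theta_{kl}\|_{L^2}$ using the first inequality for $|\theta_{kl}|$ together with an $L^\infty$-type or discrete Sobolev estimate on $u$. The discrete Sobolev inequality on a subdomain states $\|u\|_{L^\infty} \preceq (1+\log(H/h))^{1/2}(\|u\|_{H^1_h}$-scaled norm$)$, and squaring produces one factor of $(1+\log(H/h))$; combined with the logarithmic factor from $\theta_{kl}$ this yields the $(1+\log(H/h))^2$. The $H_i^{-2}\|u\|^2_{L^2} + |u|^2_{H^1_h}$ structure on the right is exactly the scaled full norm that appears in such discrete Sobolev estimates, so the proportionality constant is dimensionally correct.

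**Main obstacle.** The hardest part, and the step I would spend the most care on, is the second inequality's product term—specifically proving the discrete Sobolev inequality and the product interpolation estimate \emph{for the nonconforming Crouzeix-Raviart space}. In the CR space, functions are discontinuous across element edges and the nodal interpolation $I_h$ of a product $\theta_{kl}u$ need not coincide with $\theta_{kl}u$ itself, so I must control the interpolation error of $I_h(\theta_{kl}u)$ on each element and verify that the broken seminorm of the interpolant is equivalent (up to constants) to the expected quantity. The commuting/interpolation properties of $I_h$ together with the scaling arguments on reference elements are what make this go through, and these are precisely the technical facts that the cited reference \cite{Marcinkowski:2005:ADD} supplies; my plan is to invoke those CR-specific discrete Sobolev and interpolation lemmas rather than reprove them, and then assemble the two terms with the correct powers of the logarithm.
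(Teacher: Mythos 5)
The paper does not actually prove Lemma~\ref{lem:coarsebasisbound}: it is stated as one of two auxiliary lemmas given explicitly ``without proofs,'' with the reader referred to \cite{Marcinkowski:2005:ADD} and the references therein, so there is no in-paper argument to compare yours against. Your sketch follows the standard route taken in that literature: for the first bound, the minimal-energy property of the discrete harmonic extension reduces everything to constructing an explicit comparison function whose gradient is controlled by the reciprocal of the distance to the endpoints of $\Gamma_{kl}$, and integrating that over $\Omega_i$ produces the single logarithm; for the second bound, the elementwise product rule together with the discrete Sobolev inequality $\|u\|^2_{L^\infty(\Omega_i)}\preceq (1+\log(H_i/h_i))\bigl(H_i^{-2}\|u\|^2_{L^2(\Omega_i)}+|u|^2_{H^1_h(\Omega_i)}\bigr)$ and the first bound yields the squared logarithm with exactly the scaled norm appearing on the right-hand side. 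You also correctly single out the genuinely delicate Crouzeix--Raviart issues (no maximum principle for the discrete harmonic extension, $I_h(\theta_{kl}u)\neq\theta_{kl}u$, and the nonconforming version of the discrete Sobolev inequality), which is precisely the technical content supplied by the cited reference. One point to make explicit: $u_{kl}$ is prescribed only by its CR nodal values on $\partial\Omega_i$, so before applying the product rule you must invoke the minimal-energy property a second time to dominate $|u_{kl}|^2_{H^1_h(\Omega_i)}$ by the broken seminorm of a concrete extension such as $I_h(\theta_{kl}u)$ (suitably set to zero off $\partial\Omega_i$); as written, your argument treats $u_{kl}$ as if it coincided with that interpolant throughout $\Omega_i$. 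With that step added, your outline is a faithful reconstruction of the standard proof and is consistent with what the paper delegates to \cite{Marcinkowski:2005:ADD}.
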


\begin{lem}
 \label{lem:estimate}
 For any $u\in V_0$ the following holds
 \begin{equation}
  a(u,u)\preceq \sum_{k=1}^NM_k\left(1+\log\left(\frac{H_i}{h_i}\right)\right)\left(\sum_{\Gamma_{kl}\neq\Gamma_{ik}}(u_{kl}-u_{\Gamma_{kj}})(u_{kl}-u_{\Gamma_{kj}})\right),\\
 \end{equation}
where the second sum is taken over all pairs of edges $\Gamma_{kl},\Gamma_{ik}\subset\partial\Omega_k$.
\end{lem}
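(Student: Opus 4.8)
\section*{Proof plan}

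The plan is to reduce the global energy to a sum of local contributions over the subdomains and then estimate each local contribution using the single edge-function bound already available in Lemma~\ref{lem:coarsebasisbound}. Since the bilinear form splits as $a(u,u)=\sum_{k=1}^N a_{k,h}^{FE}(u,u)$, and the coefficient matrix satisfies $\xi^T A(x)\xi\le M_k|\xi|^2$ on $\Omega_k$, the first step is to bound each local term by the broken seminorm, $a_{k,h}^{FE}(u,u)\preceq M_k\,|u|^2_{H^1_h(\Omega_k)}$. It therefore suffices to estimate $|u|^2_{H^1_h(\Omega_k)}$ for the restriction of a coarse function $u\in V_0$ to a single subdomain $\Omega_k$.

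Because $u\in V_0$, its restriction to $\Omega_k$ is the discrete harmonic function whose CR boundary values are constant, equal to the coefficient $u_{kl}$, on each interface edge $\Gamma_{kl}\subset\partial\Omega_k$; hence $u_{|\Omega_k}=\sum_{\Gamma_{kl}\subset\partial\Omega_k} u_{kl}\,\theta_{kl}$. The key observation is that the edge functions form a partition of unity: each $\theta_{kl}$ equals $1$ at the CR nodes of its own edge and $0$ at those of every other interface edge, so $\sum_{\Gamma_{kl}\subset\partial\Omega_k}\theta_{kl}$ is the discrete harmonic extension of the boundary value $1$, which by uniqueness is the constant function $1$ on $\Omega_k$. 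Since the seminorm $|\cdot|_{H^1_h(\Omega_k)}$ annihilates constants, I may subtract any reference value $c_k$ without changing the local energy, and writing $c_k=u_{\Gamma_{kj}}$ for one fixed edge $\Gamma_{kj}\subset\partial\Omega_k$ gives
$$u_{|\Omega_k}-c_k=\sum_{\Gamma_{kl}\subset\partial\Omega_k}(u_{kl}-u_{\Gamma_{kj}})\,\theta_{kl}.$$

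Next I apply the triangle inequality in the seminorm and, using that the number of interface edges of a shape-regular subdomain is bounded, a discrete Cauchy--Schwarz inequality, to obtain
$$|u|^2_{H^1_h(\Omega_k)}=|u-c_k|^2_{H^1_h(\Omega_k)}\preceq\sum_{\Gamma_{kl}\subset\partial\Omega_k}(u_{kl}-u_{\Gamma_{kj}})^2\,|\theta_{kl}|^2_{H^1_h(\Omega_k)}.$$
Inserting the first bound of Lemma~\ref{lem:coarsebasisbound}, namely $|\theta_{kl}|^2_{H^1_h(\Omega_k)}\preceq 1+\log(H_k/h_k)$, and summing over $k$ then yields the stated estimate, once the single-reference differences $(u_{kl}-u_{\Gamma_{kj}})^2$ are rewritten in terms of the pairwise differences appearing on the right-hand side; this last step is harmless because $(u_{kl}-u_{km})^2\le 2(u_{kl}-c_k)^2+2(u_{km}-c_k)^2$ and conversely, so the two families of differences are equivalent up to a constant depending only on the bounded number of edges.

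I expect the main obstacle to be not any single estimate but the careful bookkeeping: justifying the partition-of-unity identity $\sum_{\Gamma_{kl}\subset\partial\Omega_k}\theta_{kl}\equiv 1$ on each $\Omega_k$ (which relies on uniqueness of discrete harmonic extensions), choosing the reference value so that the right-hand side takes exactly the pairwise-difference form, and confirming that the decomposition $u_{|\Omega_k}=\sum u_{kl}\theta_{kl}$ is legitimate for every $u\in V_0$. Once the constant-subtraction trick is in place, the only analytic input is the per-edge logarithmic bound from Lemma~\ref{lem:coarsebasisbound}, so the remaining work is essentially algebraic.
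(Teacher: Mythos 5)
The paper itself states this lemma without proof (deferring to \cite{Marcinkowski:2005:ADD}), so I am comparing your argument against the standard proof it points to. Your overall route is the expected one: localize the energy as $a(u,u)=\sum_k a_k(u,u)\preceq\sum_k M_k|u|^2_{H^1_h(\Omega_k)}$, write $u_{|\Omega_k}=\sum_{\Gamma_{kl}\subset\partial\Omega_k}u_{kl}\theta_{kl}$, subtract a reference constant, apply Cauchy--Schwarz with a constant depending on the (bounded) number of edges per subdomain, and invoke the per-edge bound $|\theta_{kl}|^2_{H^1_h(\Omega_k)}\preceq 1+\log(H_k/h_k)$ from Lemma~\ref{lem:coarsebasisbound}. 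Those steps, and the equivalence between single-reference and pairwise differences, are all sound.

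The genuine gap is the partition-of-unity identity $\sum_{\Gamma_{kl}\subset\partial\Omega_k}\theta_{kl}\equiv 1$ on $\Omega_k$, on which your constant-subtraction step rests. This identity holds only for subdomains whose entire boundary lies on the interface $\Gamma$. If $\Omega_k$ touches $\partial\Omega$, every $\theta_{kl}$ (being an element of $V_h$) vanishes at the CR nodes of $\partial\Omega_k\cap\partial\Omega$, so $\sum_{\Gamma_{kl}\subset\partial\Omega_k}\theta_{kl}$ is the discrete harmonic extension of boundary data equal to $1$ on the interface nodes but $0$ on the outer-boundary nodes --- not the constant $1$. Consequently $u_{|\Omega_k}-c_k$ is \emph{not} equal to $\sum_{\Gamma_{kl}\subset\partial\Omega_k}(u_{kl}-c_k)\theta_{kl}$ for any $c_k\neq 0$, and your identity $|u|^2_{H^1_h(\Omega_k)}=|u-c_k|^2_{H^1_h(\Omega_k)}\preceq\sum(u_{kl}-c_k)^2|\theta_{kl}|^2_{H^1_h(\Omega_k)}$ fails there. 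For such subdomains you are forced to take $c_k=0$, which produces the terms $u_{kl}^2$ (differences from the Dirichlet value $0$) rather than pairwise differences of interface coefficients; indeed, for a boundary subdomain with a single interface edge the pairwise sum on the right-hand side is empty while the left-hand side is generally nonzero, so the stated bound only makes sense if $\partial\Omega_k\cap\partial\Omega$ is counted as an additional ``edge'' carrying the coefficient $0$. You should restrict the partition-of-unity argument to interior subdomains and handle boundary subdomains separately with $c_k=0$, making this convention explicit.
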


We are now ready to state the main theorem for the convergence rate of our ASM applied to nonsymmetric problem (\ref{eq:disc_pr}).
\begin{thm}
\label{thm:mainthmfve}
 There exists $h_0>0$ such that for all $h<h_0$, $k=1,2,$ and $u\in V_h$, we have
\begin{eqnarray*}
 \|T^{type}u\|_a&\preceq&\|u\|_a,  \\
a(T^{type}u,u)&\succeq& \left(1+\log\left(\frac{H_i}{h_i}\right)\right)^{-2}
 \: a(u,u) ,
\end{eqnarray*}
\end{thm}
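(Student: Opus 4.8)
The plan is to prove the two bounds first for an auxiliary, genuinely symmetric operator built from the finite element form $a_h^{FE}$, and then to transfer them to the operators $T^{sym}$ and $T^{nsym}$ by exploiting the closeness of $a_h^{FE}$ and $a_h^{FV}$ recorded in Lemma~\ref{lem:conv}. Concretely, I would introduce $\hat T_i\colon V_h\to V_i$ defined by $a_h^{FE}(\hat T_i u,v)=a_h^{FE}(u,v)$ for all $v\in V_i$ and set $\hat T=\hat T_0+\sum_{\Gamma_{kl}\subset\Gamma}\hat T_{kl}+\sum_{k=1}^N\hat T_k$. Each $\hat T_i$ is then the $a_h^{FE}$-orthogonal projection onto $V_i$, so the abstract additive Schwarz framework of \cite{marcinkowskirahman2013} applies to $\hat T$, and it suffices to verify the standard assumptions for the edge based decomposition $V_h=V_0+\sum_{\Gamma_{kl}\subset\Gamma}V_{kl}+\sum_{k=1}^N V_k$.

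For the upper bound $\|\hat Tu\|_a\preceq\|u\|_a$ I would use a finite-overlap (coloring) argument. The coarse space $V_0$ is treated as a single component, while each edge space $V_{kl}$ and each interior space $V_k$ is supported on at most the two subdomains sharing $\Gamma_{kl}$, respectively on the single subdomain $\Omega_k$; hence the number of mutually $a$-interacting local subspaces is uniformly bounded independently of $h$ and $H$. Since each $\hat T_i$ is an energy projection, so that $\|\hat T_i u\|_a\le\|u\|_a$, a strengthened Cauchy--Schwarz inequality over this bounded interaction pattern gives $\|\hat Tu\|_a\preceq\|u\|_a$, i.e.\ $C_p\preceq 1$.

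The coercivity bound is the heart of the matter and rests on a stable decomposition of any $u\in V_h$. I would take $u_0\in V_0$ to be the coarse edge interpolant whose coefficient on each $\theta_{kl}$ is the average of $u$ over the CR nodes of $\Gamma_{kl,h}^{CR}$, set $u_{kl}\in V_{kl}$ to be the discrete harmonic function agreeing with $\theta_{kl}(u-u_0)$ at the CR nodes on $\partial\Omega_k\cup\partial\Omega_l$, and let $u_k=P_k u\in V_k$ be the interior projections, so that $u=u_0+\sum u_{kl}+\sum u_k$. Bounding $a(u_0,u_0)$ by Lemma~\ref{lem:estimate}, controlling each $a(u_{kl},u_{kl})$ through the seminorm estimates of Lemma~\ref{lem:coarsebasisbound}, and estimating $a(u_k,u_k)$ by the minimal energy property of discrete harmonic functions, I would obtain
$$\sum_i a_h^{FE}(u_i,u_i)\preceq\left(1+\log\left(\frac{H_i}{h_i}\right)\right)^{2}a_h^{FE}(u,u).$$
By the abstract theory this stable splitting yields $a(\hat Tu,u)\succeq\left(1+\log(H_i/h_i)\right)^{-2}a(u,u)$, and in particular $c_p>0$.

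It remains to pass from $\hat T$ to $T^{sym}$ and $T^{nsym}$. Each $T_i^{type}$ differs from $\hat T_i$ only by replacing one or both occurrences of $a_h^{FE}$ by $a_h^{FV}$; testing the defining identity with $v=(T_i^{type}-\hat T_i)u$ and invoking Lemma~\ref{lem:conv}, which shows $a_h^{FE}$ and $a_h^{FV}$ agree up to an $O(h)$ term in the energy norm, gives $\|(T_i^{type}-\hat T_i)u\|_a\preceq h\,\|u\|_a$. Summing over the uniformly bounded number of interacting subspaces, the total perturbation $T^{type}-\hat T$ has energy norm $\preceq h$, so there is $h_0>0$ such that for $h<h_0$ it can be absorbed into the constants without destroying either the upper bound or the strict positivity of the coercivity estimate, leaving the factor $\left(1+\log(H_i/h_i)\right)^{-2}$ intact. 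The main obstacle I anticipate is the construction and sharp estimation of the stable decomposition, in particular keeping the logarithmic powers optimal when splitting $u-u_0$ into edge contributions via the cut-off functions $\theta_{kl}$; the subsequent perturbation argument is comparatively routine once Lemma~\ref{lem:conv} is in hand, but one must still check that the $O(h)$ bound is uniform across all local solvers and does not interact badly with the coarse component.
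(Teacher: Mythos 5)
Your proposal reproduces the substantive core of the paper's argument: the stable decomposition is exactly the one used there ($u_0=\sum_{kl}\bar u_{kl}\theta_{kl}$, interior parts $u_k=P_k(u-u_0)$, edge parts obtained by cutting off $u-u_0$ with $\theta_{kl}$ on $\Gamma_{kl,h}^{CR}$), estimated with the same two auxiliary results (Lemma~\ref{lem:coarsebasisbound} and Lemma~\ref{lem:estimate}), and the upper bound is obtained by the same coloring/strengthened Cauchy--Schwarz argument. Where you genuinely diverge is in how the discrepancy between $a_h^{FE}$ and $a_h^{FV}$ is handled. The paper does not introduce an auxiliary symmetric operator at all: it verifies three assumptions (the $O(h)$ closeness of the forms, i.e.\ Lemma~\ref{lem:conv}, the stable decomposition, and the strengthened Cauchy--Schwarz inequalities) and then invokes the abstract nonsymmetric additive Schwarz framework of \cite{marcinkowskirahman2013}, which delivers both bounds for $T^{sym}$ and $T^{nsym}$ directly. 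You instead prove the bounds for the purely $a_h^{FE}$-based projections $\hat T_i$ and transfer them by an explicit perturbation estimate. Your route is more self-contained and makes visible exactly where the smallness of $h$ is used; the paper's route offloads that bookkeeping to the cited framework.

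One step in your perturbation argument needs more care than you give it. Testing the defining identities with $v=(T_i^{type}-\hat T_i)u$ and using Lemma~\ref{lem:conv} as stated gives $\|(T_i^{type}-\hat T_i)u\|_a\preceq h\|u\|_a$ with the \emph{global} energy norm of $u$ on the right, for each of the $O(N)$ subspaces. Bounded overlap of the supports of the outputs then only yields
$\|\sum_i(T_i^{type}-\hat T_i)u\|_a^2\preceq\sum_i h^2\|u\|_a^2\preceq N\,h^2\|u\|_a^2$,
i.e.\ an extra factor $\sqrt{N}\sim H^{-1}$, so the total perturbation is $O(h/H)$ rather than $O(h)$ and your threshold $h_0$ silently depends on the coarse mesh. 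To get the clean $O(h)$ bound you must first localize Lemma~\ref{lem:conv}: since the difference of the two forms is assembled elementwise, for $v$ supported in the (union of at most two) subdomains carrying $V_i$ one has $|a_h^{FE}(u,v)-a_h^{FV}(u,I_h^*v)|\preceq h\|u\|_{a,\mathrm{supp}\,V_i}\|v\|_a$, after which summing the squared local bounds with bounded overlap recovers $\preceq h^2\|u\|_a^2$. Either fix is available (the $O(h/H)$ loss can still be absorbed against the $(1+\log(H/h))^{-2}$ coercivity for $H/h$ large), but as written the assertion that the total perturbation has energy norm $\preceq h$ does not follow from what you have established. You should also note that for $T_i^{nsym}$ the test-function argument first requires an a priori bound $\|T_i^{nsym}u\|_a\preceq\|u\|_a$, which uses the positive definiteness of $a_h^{FV}$ for $h\le h_0$; this is available from the paper but must be invoked before the perturbation identity is useful.
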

\begin{proof}
Following the framework of \cite{marcinkowskirahman2013} we need to prove three key assumptions.
\begin{ass}[1] There exists $h_0>0$ such that for all $u,v\in V_h$ the following holds
  \begin{eqnarray}
  |a_h^{FE}(u,v)-a_h^{FV}(u,I_h^*v)|\preceq h\|u\|_a\|v\|_a,
 \end{eqnarray}
\end{ass}
This is just Lemma~\ref{lem:conv}.
\begin{ass}[2]
\label{ass:sym1}
For all $u\in V^h$ there exists a constant $C>0$ such that there is a representation $u=u_0+\sum_{i=1}^N u_i+\sum_{kl}u_{kl},\; $ with $u_0\in V_0,u_i\in V_i,u_{kl}\in V_{kl}$, such that
\begin{equation*}
 a(u_0,u_0)+\sum_{i=1}^Na(u_i,u_i)+\sum_{kl}a(u_{kl},u_{kl})\leq C\leq\left(1+\log\left(\frac{H_i}{h_i}\right)\right)^2a(u,u).
\end{equation*}

\end{ass}
This assumption is the same as Assumption 1 in the standard Schwarz framework for domain decomposition methods, cf (\cite{smith1996domain, Toselli:2005:DDM}). To verify the assumption we first need to define a decomposition of the function $u\in V^h$. Following the lines of the proof of Lemma 6.1 in \cite{Marcinkowski:2005:ADD} we start by letting $u_0\in V_0$ be defined by $u_0=\sum_{kl}\bar{u}_{kl}\theta_{kl}$, where $\bar{u}_{kl}$ is an average of $u$ over $\Gamma_{kl}$.

Next, let $w=u-u_0$ and define $u_k=P_kw$ for each subspace $V_k$. Note that $P_kw=P_ku$ since $u_0$ is discrete harmonic and also $w-\sum_{k=1}^Nu_k$ is discrete harmonic in each subdomain. The decomposition for $V_{kl}$ is straightforward. For an edge $\Gamma_{kl}$ define $u_{kl}\in V_{kl}$ at the CR nodes of $\Gamma_{kl}$ as
$$u_{kl}(p)=\theta_{kl}(p)w(p),\qquad\forall p\in \Gamma_{kl,h}^{CR}.$$ Above we have used the fact that $u_k\in V_k$ are equal to zero in $\bigcup_{k=1}^N\partial\Omega_{k,h}^{CR}$, i.e., $u_k$ are equal to zero at all CR nodes on the boundary of any substructures. Clearly this yields $u=u_0+\sum_{i=1}^N u_i+\sum_{kl}u_{kl}$.

To validate the estimate of Assumption \ref{ass:sym1} we start by estimating $a(u_0,u_0)$. From Lemma~\ref{lem:estimate} and Schwarz inequality we have
\begin{eqnarray*}
 a(u_0,u_0)&\preceq&\sum_{k=1}^NM_k\left(1+\log\left(\frac{H_i}{h_i}\right)\right)\sum_{\Gamma_{kl},\Gamma_{ik}\subset\partial\Omega_k}|\bar u_{kl}-\bar u_{\Gamma_{kj}}|^2\\
 &\preceq&\sum_{k=1}^NM_k\left(1+\log\left(\frac{H_i}{h_i}\right)\right)\frac{1}{H}\sum_{\Gamma_{kj}\subset\partial\Omega_k}\|u-\bar u_{kl}\|^2_{L^2(\Gamma_{kj})},
\end{eqnarray*}
where $\Gamma_{kl}$ is an arbitrary edge of $\Omega_k$. Applying standard trace theorem arguments and Poincare's inequality for nonconforming elements, cf. \cite{Sarkis:1997:NCS,Brenner:1996:TLS}, we get 
\begin{equation}
\label{eq:coareestimate}
 a(u_0,u_0)\preceq\left(1+\log\left(\frac{H}{\underline h}\right)\right)a(u,u).
\end{equation}
This takes care of the term corresponding to the coarse space. Next, we consider the the term $u_k\in V_k$ associated with the interior subspaces. Using the fact that $P_k$ is an orthogonal projection with respect to the local bilinear form $a_k(\cdot,\cdot)$ and Lemma~\ref{lem:estimate} we have
\begin{eqnarray*}
\sum_{k=1}^Na(u_k,u_k)&=&\sum_{K=1}^Na_k(u_k,u_k)=\sum_{k=1}^Na_k(P_kw,P_kw)\leq a(w,w),\\
&\preceq&a(u_0,u_0)+a(u,u).
\end{eqnarray*}
From (\ref{eq:coareestimate}) we then get
\begin{equation}
\label{eq:localestimate}
 \sum_{k=1}^N a(u_k,u_k)\preceq\left(1+\log\left(\frac{H}{\underline h}\right)\right)a(u,u),
\end{equation}
which completes the estimate for the local components.

Next, we need to bound the term associated with the edge subspaces. By (\ref{eq:coarsebasisbound}) in Lemma \ref{lem:coarsebasisbound} and Poincare's inequality for nonconforming elements we get
\begin{eqnarray*}
 a_k(u_{kl},u_{kl})&\leq&M_k|u_{kl}|^2_{H^1_h(\Omega_k)},\\
 &\preceq&M_k\left(1+\log\left(\frac{H_i}{h_i}\right)\right)^2\left(\frac{1}{H_i^2}\|u-\bar u_{kl}\|^2_{L^2(\Omega_k)}+|u-\bar u_{kl}|^2_{H^1_h(\Omega_k)}\right)\\
 &\preceq&M_k\left(1+\log\left(\frac{H_i}{h_i}\right)\right)^2|u|^2_{H^1_h(\Omega_k)}\preceq\frac{M_k}{\alpha_k}\left(1+\log\left(\frac{H_i}{h_i}\right)\right)^2a_k(u,u)\\
 &\preceq&\left(1+\log\left(\frac{H_i}{h_i}\right)\right)^2a_k(u,u)
\end{eqnarray*}
Summing the above estimate over all edges we get
\begin{eqnarray}
\label{eq:edgeestimate}
 \sum_{kl}a_k(u_{kl},u_{kl})&\preceq&\sum_{k=1}^N\left(1+\log\left(\frac{H_i}{h_i}\right)\right)^2a_k(u,u),\nonumber\\
 &\leq&\left(1+\log\left(\frac{H}{\underline h}\right)\right)^2a(u,u),
\end{eqnarray}
Summing (\ref{eq:coareestimate}),(\ref{eq:localestimate}) and (\ref{eq:edgeestimate}) completes the proof.

The last assumption we need to prove is the one involving Strengthened Cauchy-Schwarz inequalities. This is the same assumption given in the standard Schwarz framework for the convergence theory of domain decomposition methods, cf. \cite{smith1996domain, Toselli:2005:DDM}.
The spectral radius of the constants from these inequalities may be bounded using a standard coloring argument.

This completes the proof.
\end{proof}
\section{Numerical results}
In this section, we present some numerical results for the proposed method. 
All experiments are done for problem~\ref{eq:modelproblem} on a unit square domain $\Omega=(0,1)^2$ for the symmetric preconditioner, i.e., for $type=sym$. The coefficient $A$ is equal to $2+\sin(100\pi x)\sin(100\pi y)$, except for regions (subdomains) marked with red where $A$ equals $\alpha_1(2+\sin(100\pi x)\sin(100\pi y))$, where $\alpha_1$ is a parameter describing the jump in the coefficient (cf. Figure~\ref{fig:3} and Table~\ref{tbl:it}). The right hand side is chosen as $f=1$.
The numerical solution is found by using the generalized minimal residual method (GMRES). 
\noindent
\begin{figure}[htb]
\centering
\begin{subfigure}[b]{0.48\linewidth}
        \centering
         \includegraphics[trim=145 60 145 60, clip,width=\linewidth]{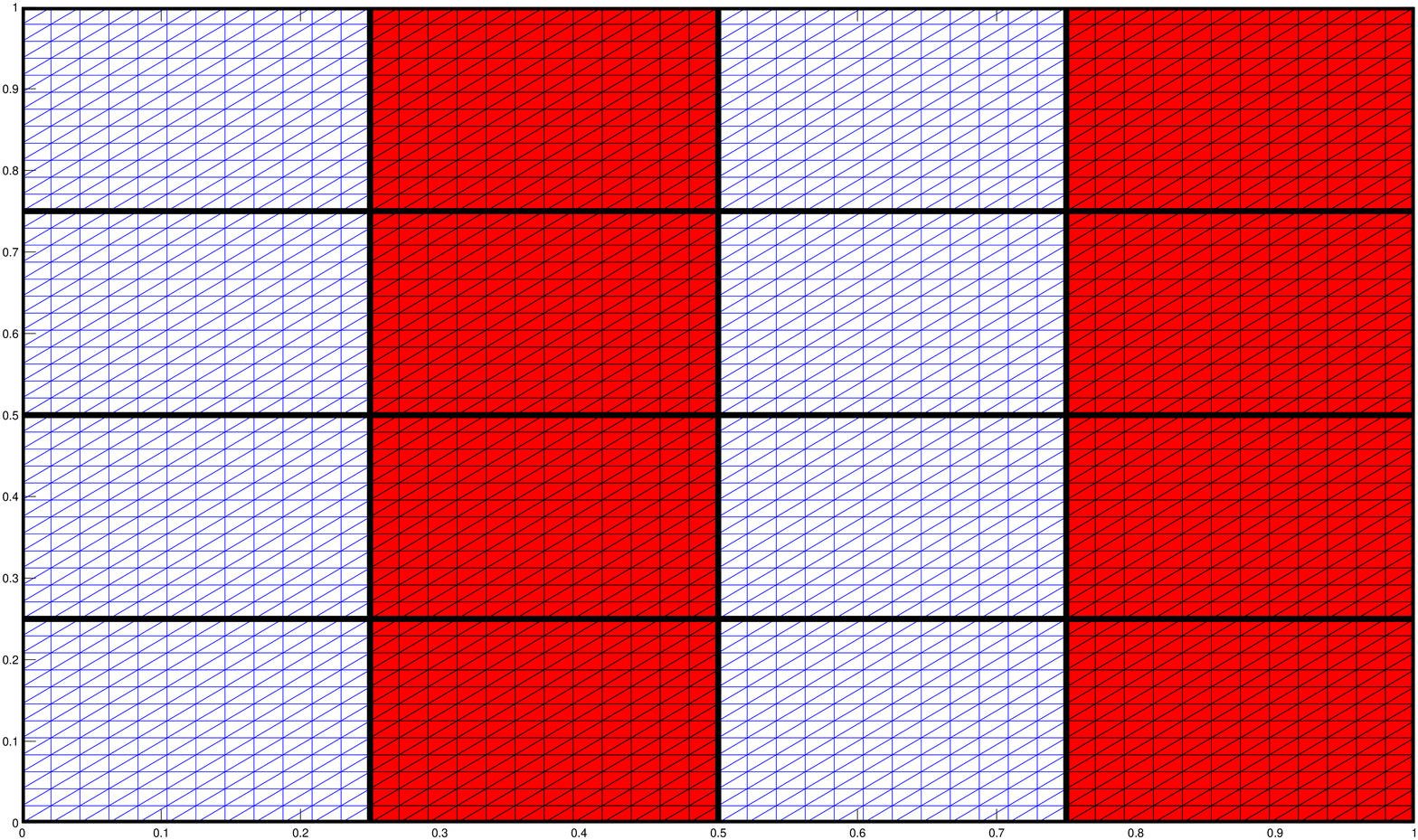}
  \caption{Problem 1.}

\end{subfigure}
\hspace{0.1cm}
\begin{subfigure}[b]{0.48\linewidth}
        \centering
 \includegraphics[trim=145 60 145 60, clip,width=\linewidth]{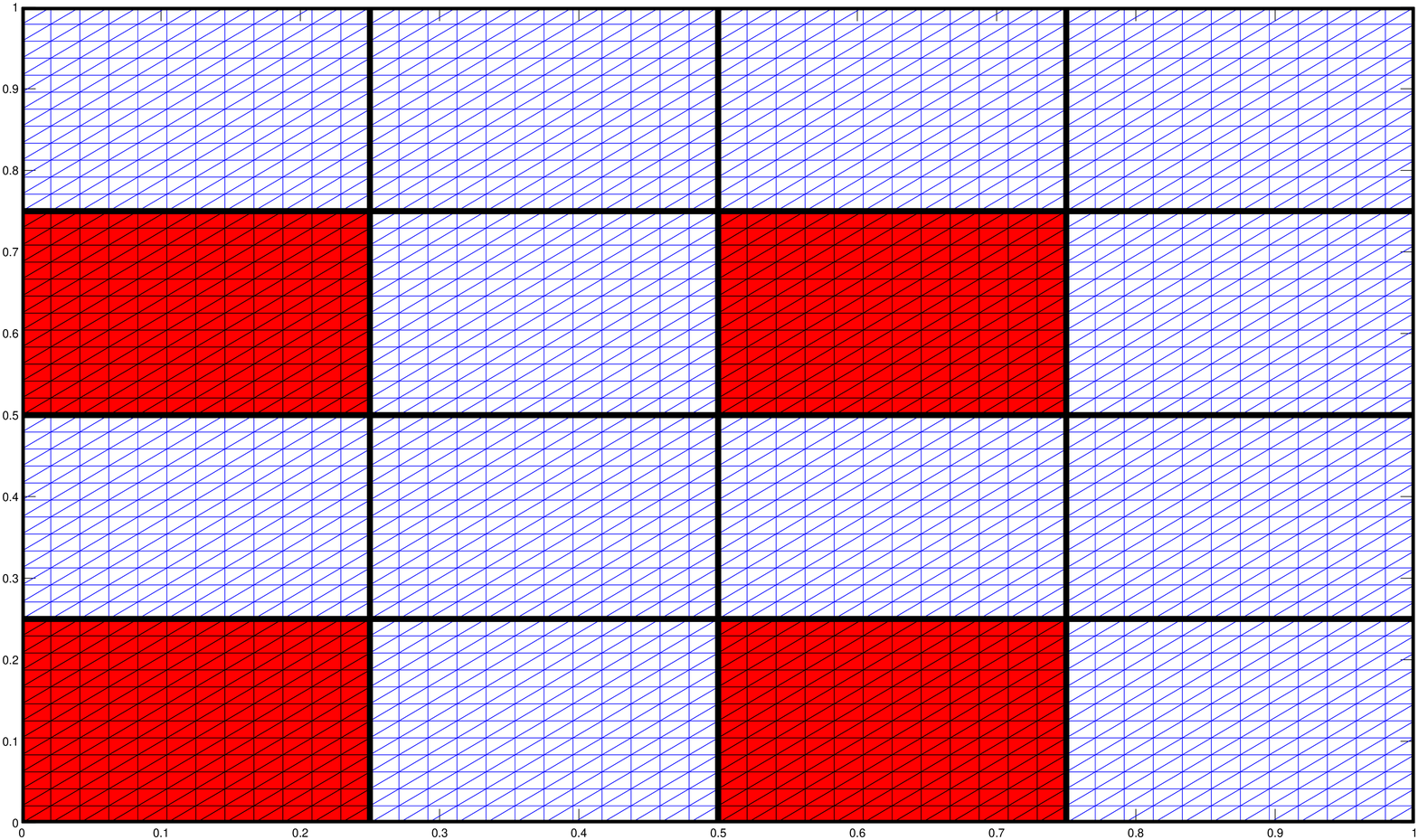}
 \caption{Problem 2. }
\end{subfigure}
\caption{Test problems 1 and 2. Regions (subdomains) marked with red are where $A$ depends on $\alpha_1$. Fine mesh consists of $48\times48$ rectangular blocks, while coarse mesh consists of $4\times4$ rectangular subdomains}
\label{fig:3}
\end{figure}
\begin{figure}[htb]
        \centering
         \includegraphics[width=\linewidth]{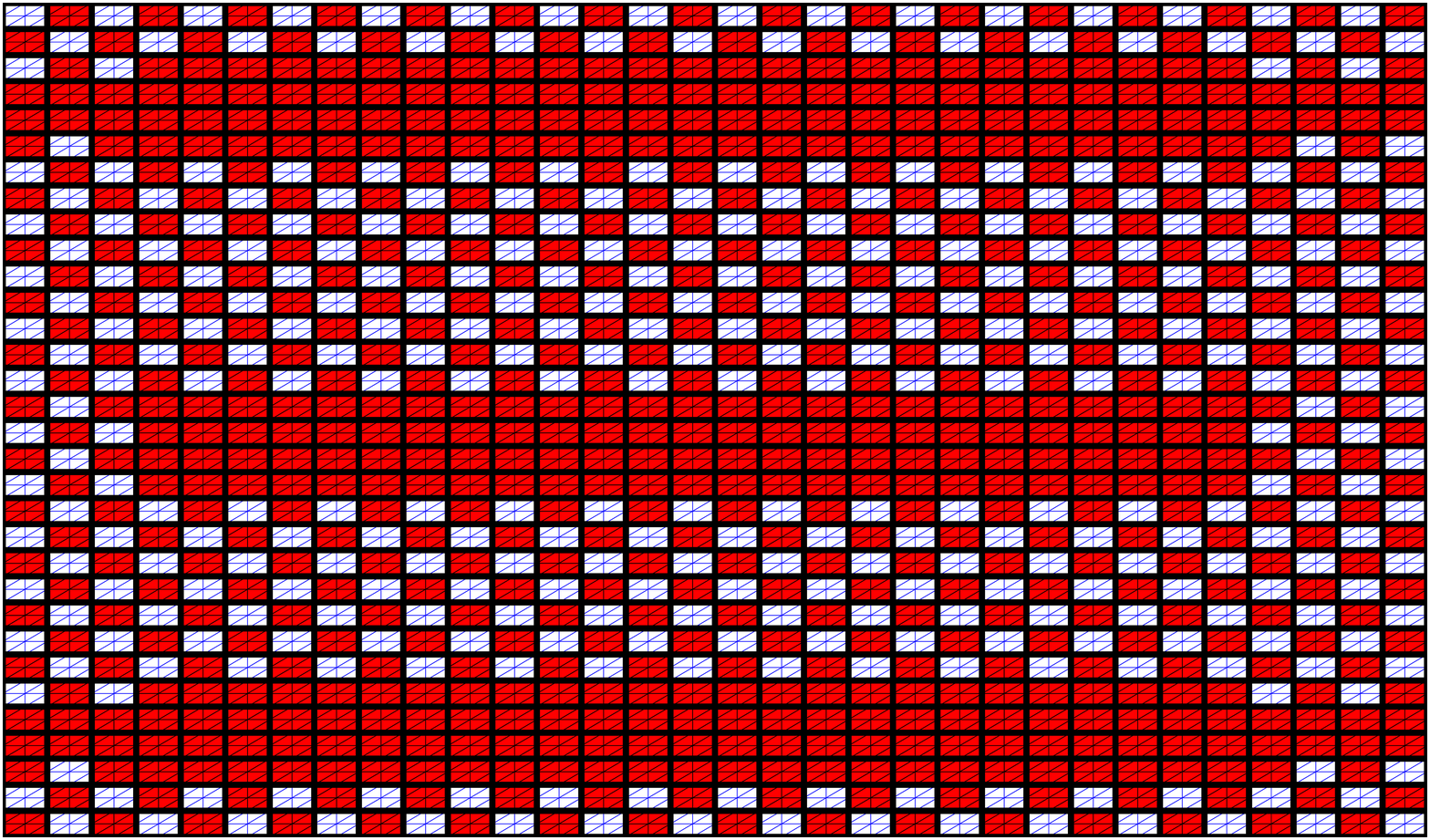}
  \caption{Test problem 3. Regions (subdomains) marked with red are where $A$ depends on $\alpha_1$. Fine mesh consists of $64\times64$ rectangular blocks, while coarse mesh consists of $32\times32$ rectangular subdomains}
\label{fig:4}

\end{figure}

\noindent
\begin{figure}[htb]
\centering
\begin{subfigure}{0.48\linewidth}
        \centering
         \includegraphics[trim=60 10 100 20, clip,width=\linewidth]{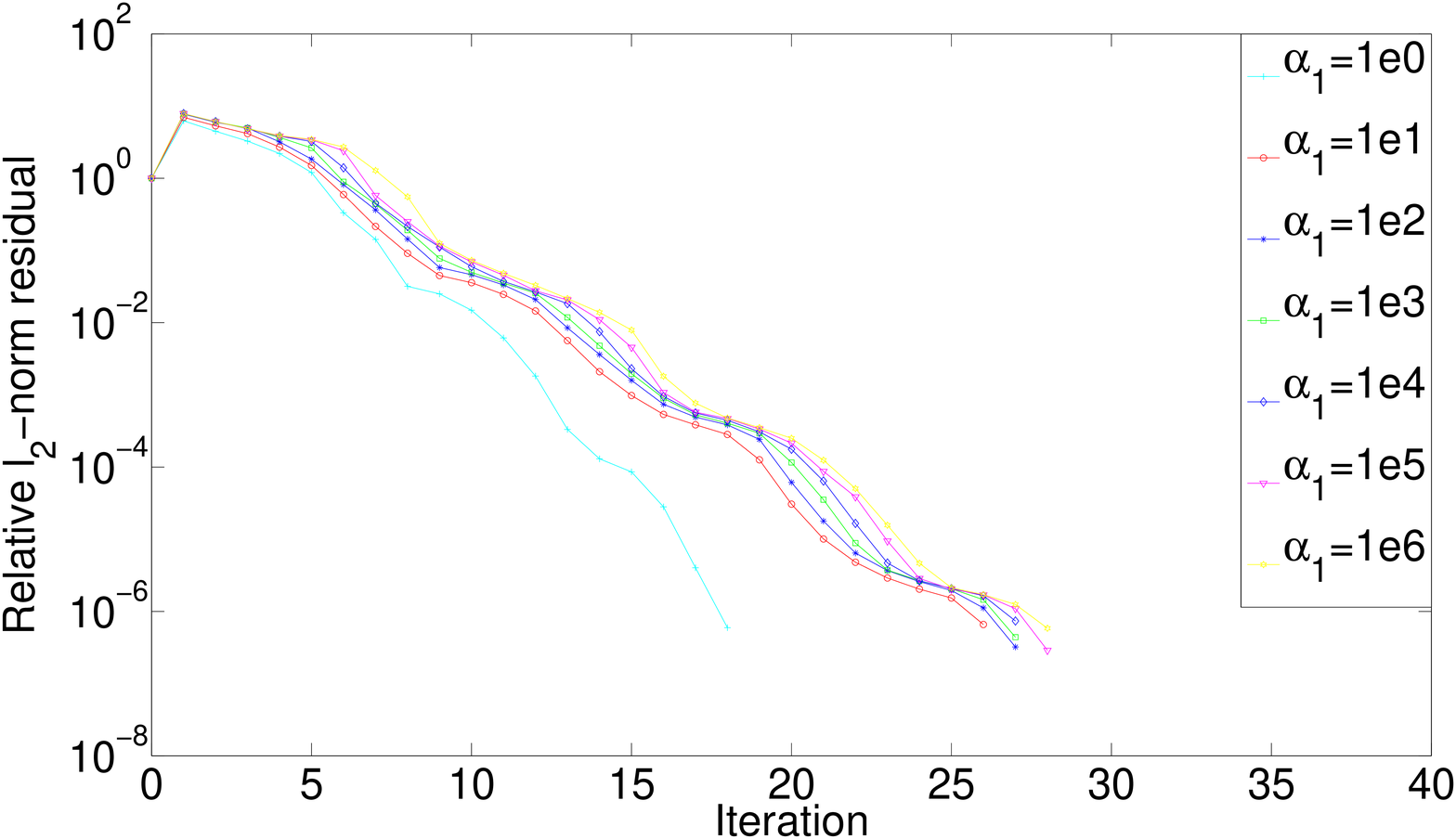}
  \caption{Problem 1.}
 \label{fig:prob1resplot}

\end{subfigure}
\hspace{0.1cm}
\begin{subfigure}{0.48\linewidth}
        \centering
 \includegraphics[trim=60 10 100 20, clip,width=\linewidth]{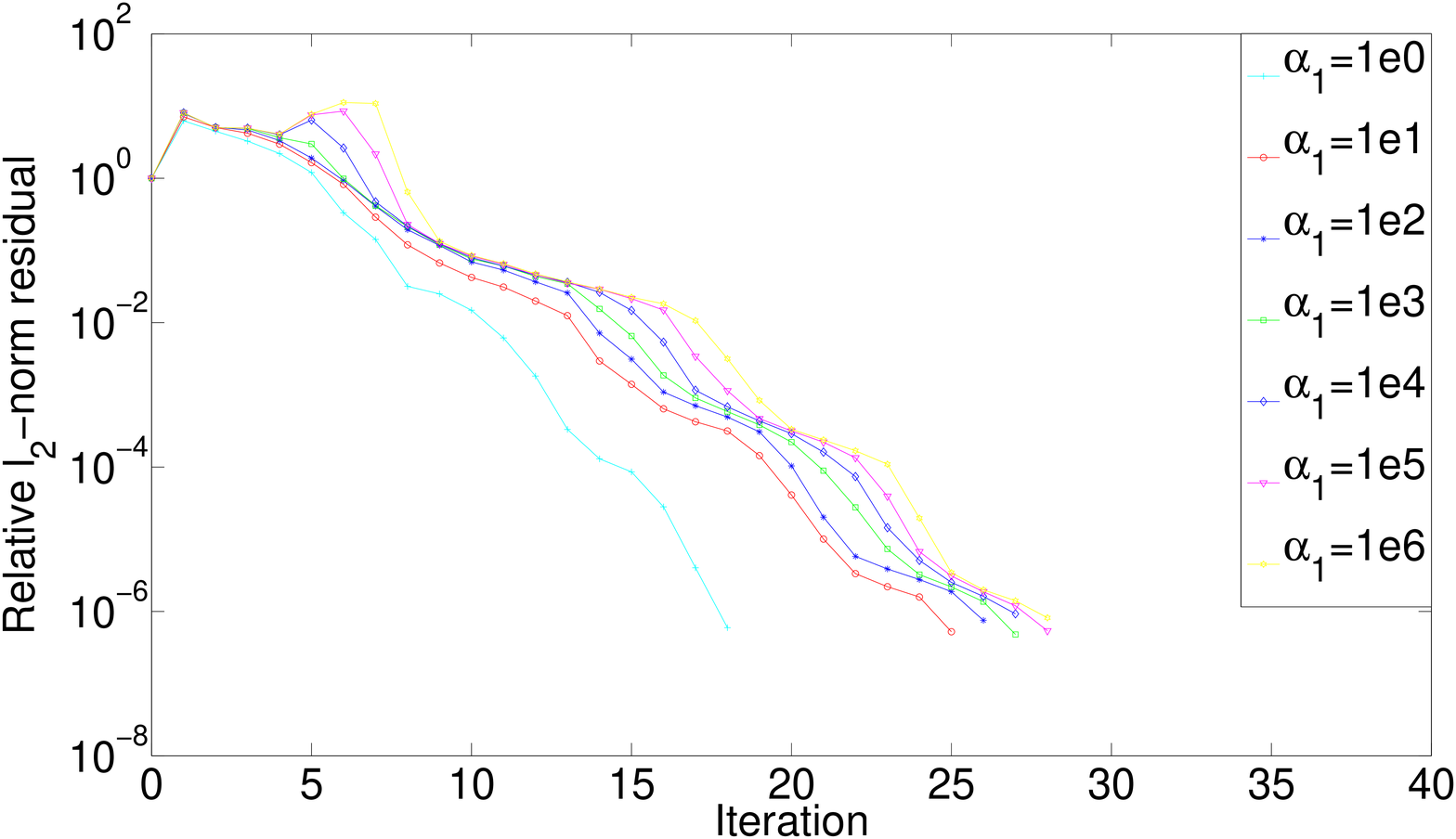}
 \caption{Problem 2.}
 \label{fig:prob2resplot}
\end{subfigure}
\begin{subfigure}{0.48\linewidth}
        \centering
 \includegraphics[trim=60 10 100 20, clip,width=\linewidth]{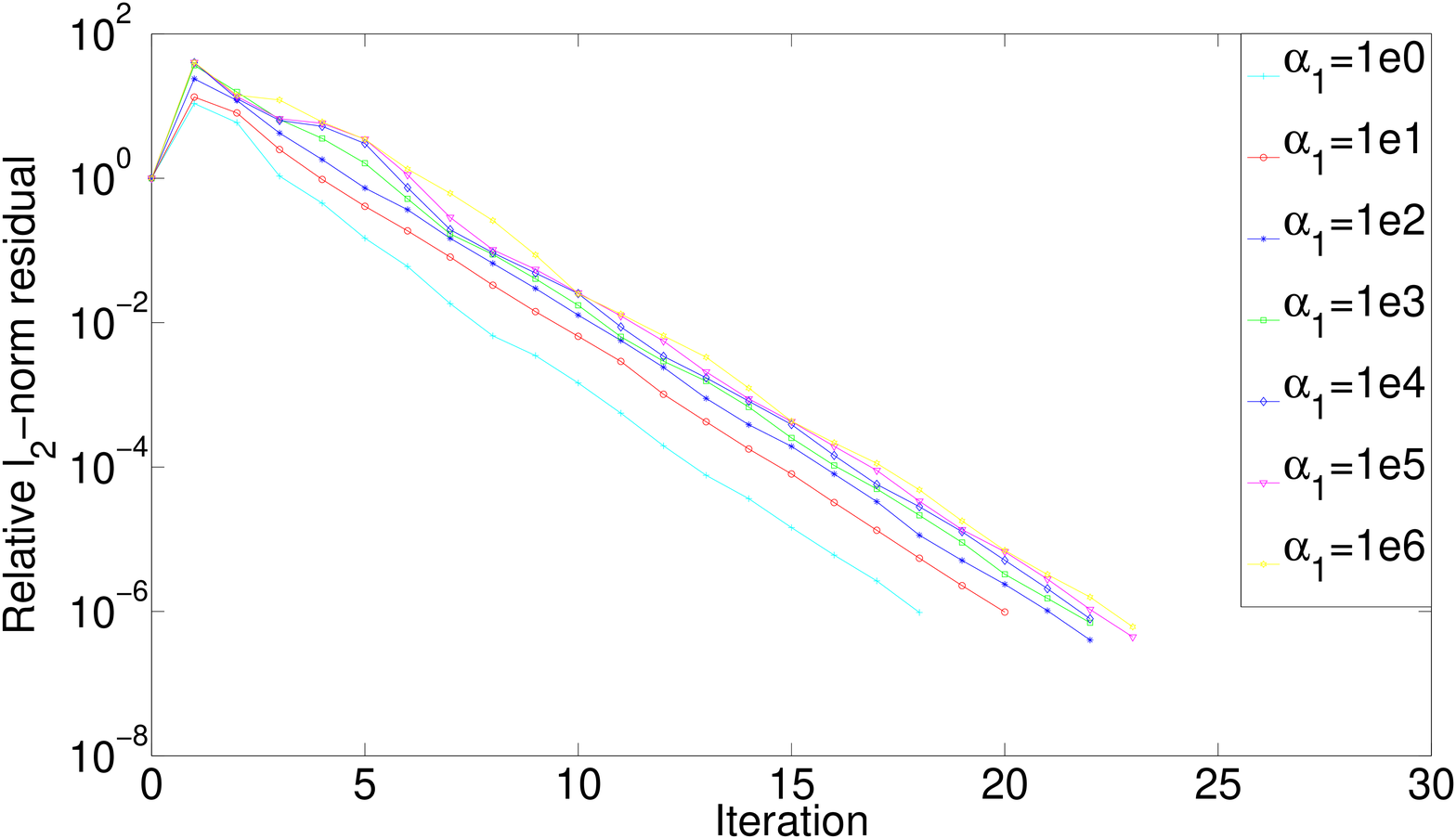}
 \caption{Problem 3.}
 \label{fig:prob3resplot}
\end{subfigure}
 \caption{Relative residual norms for GMRES minimizing the $A$-norm for different values of $\alpha_1$.}
\label{fig:relresplot}
\end{figure}
We run the method until the $l_2$ norm of the residual is reduced by a factor of $10^{6}$, that is when $\|r_i\|_2/\|r_0\|_2\leq 10^{-6}$. The number of iterations and estimates of the smallest eigenvalue for the different types of problems under consideration, are shown in Table~\ref{tbl:it}--\ref{tbl:asstab2}. 

We first consider the two test problems with discontinuities over subdomain boundaries as shown in Figure~\ref{fig:3} for a fine mesh $h=1/32$ and coarse mesh $H=1/4$. In Problem 3 we extend the two previous problems into a larger and more complicated problem with respect to the distribution and discontinuities of the coefficient $A$, see Figure~\ref{fig:4}. The fine and coarse mesh parameters are here $h=1/128$ and $H=1/32$, respectively. The number of iterations used by the preconditioned GMRES method are reported in Table~\ref{tbl:it} with the smallest eigenvalue of the preconditioned system (\ref{eq:precond}) shown in the parentheses next to the iteration numbers. In Figure~\ref{fig:prob1resplot}--\ref{fig:prob3resplot} we have plotted the relative residuals for these problems measured in the $l_2$ norm.

In Table \ref{tbl:asstab1} and \ref{tbl:asstab2} we show the asymptotic dependency on the mesh parameters $H$ and $h$ for two test cases where the coefficient $A$ is equal to $2+\sin(100\pi x)\sin(100\pi y)$ and $2+\sin(10\pi x)\sin(10\pi y)$, respectively.

\begin{table}
\centering
\begin{tabular}{l l l l}
&Problem 1:&Problem 2:&Problem 3:\\
\hline
$\alpha_1$ &$\sharp$ iter.&$\sharp$ iter.&$\sharp$ iter.\\
\hline
  $10^0$&18(2.15e-1)&18(2.15e-1)&18(4.73e-1)\\
  $10^1$&25(2.14e-1)&26(2.09e-1)&20(4.89e-1)\\
  $10^2$&26(2.14e-1)&27(2.07e-1)&22(4.88e-1)\\
  $10^3$&27(2.14e-1)&27(2.06e-1)&22(4.84e-1)\\
  $10^4$&27(2.14e-1)&27(2.06e-1)&22(4.78e-1)\\
  $10^5$&27(2.14e-1)&28(2.06e-1)&23(4.77e-1)\\
  $10^6$&28(2.14e-1)&28(2.06e-1)&23(4.77e-1)\\
\hline
\end{tabular}
\vspace{5mm}
\caption{Number of GMRES iterations until convergence for the solution of (\ref{eq:disc_pr}), with different values of $\alpha_1$ describing the coefficient $A$ in the red regions, cf. figures \ref{fig:3} and \ref{fig:4}.} 
\label{tbl:it}
\end{table}
\begin{table}
\centering
\begin{tabular}{|l| l l l l l l |}
\hline
$h/H$ &$1/4$&$1/8$&$1/16$&$1/32$&$1/64$&$1/128$\\
\hline
  $1/8$&13(5.31e-1)&&&&&\\
  $1/16$&16(3.47e-1)&17(4.86e-1)&&&&\\
  $1/32$&17(2.23e-1)&20(3.44e-1)&17(4.85e-1)&&&\\
  $1/64$&19(1.62e-1)&24(2.51e-1)&20(3.46e-1)&17(4.85e-1)&&\\
  $1/128$&21(1.24e-1)&28(1.86e-1)&24(2.60e-1)&20(3.45e-1)&16(4.85e-1)&\\
  $1/256$&24(9.84e-2)&32(1.41e-1)&29(1.90e-1)&23(2.63e-1)&19(3.47e-1)&16(4.85e-1)\\
\hline
\end{tabular}
\vspace{5mm}
\caption{Iteration number for increasing values of $h$ and $H$ with $A=2+\sin(10\pi x)\sin(10\pi y)$.} 
\label{tbl:asstab1}
\end{table}
\begin{table}
\centering
\begin{tabular}{|l| l l l l l l |}
\hline
$h/H$ &$1/4$&$1/8$&$1/16$&$1/32$&$1/64$&$1/128$\\
\hline
  $1/8$&12(5.32e-1)&&&&&\\
  $1/16$&14(3.64e-1)&17(4.85e-1)&&&&\\
  $1/32$&16(2.64e-1)&19(3.45e-1)&18(4.73e-1)&&&\\
  $1/64$&19(1.87e-1)&22(2.60e-1)&21(3.36e-1)&18(4.73e-1)&&\\
  $1/128$&22(1.39e-1)&28(1.82e-1)&25(2.52e-1)&22(3.37e-1)&20(4.65e-1)&\\
  $1/256$&24(1.07e-1)&35(1.26e-1)&34(1.66e-1)&25(2.56e-1)&25(3.26e-1)&19(4.78e-1)\\
\hline
\end{tabular}
\vspace{5mm}
\caption{Iteration number for increasing values of $h$ and $H$ with $A=2+\sin(100\pi x)\sin(100\pi y)$.} 
\label{tbl:asstab2}
\end{table}

The iteration numbers and eigenvalue estimates in Table \ref{tbl:it} reflects well the theoretical results developed in Section \ref{sect:analysis}. We see no dependency on the contrast in $A$ when the jumps in the coefficient are over subdomains, see Figure~\ref{fig:3}--\ref{fig:4}. The iteration numbers and the eigenvalue estimates in Table~\ref{tbl:asstab1}--\ref{tbl:asstab2} confirms our theory that the parameters describing the convergence of the GMRES method only depends polylogarthmically on the mesh ratio $\frac{H}{h}$.

\clearpage
 \bibliography{CRFVEEdgeBased-LMR}

\begin{thebibliography}{10}

\bibitem{Brenner:1996:TLS}
Susanne~C. Brenner.
\newblock Two-level additive {S}chwarz preconditioners for nonconforming finite
  element methods.
\newblock {\em Math. Comp.}, 65(215):897--921, 1996.

\bibitem{Brenner:1999:BDD}
Susanne~C. Brenner and Li-Yeng Sung.
\newblock Balancing domain decomposition for nonconforming plate elements.
\newblock {\em Numer. Math.}, 83(1):25--52, 1999.

\bibitem{cai1989some}
Xiao-Chuan Cai and Olof~B Adviser-Widlund.
\newblock Some domain decomposition algorithms for nonselfadjoint elliptic and
  parabolic partial differential equations.
\newblock 1989.

\bibitem{Cai:1992:DDA}
Xiao-Chuan Cai and Olof~B. Widlund.
\newblock Domain decomposition algorithms for indefinite elliptic problems.
\newblock {\em SIAM J. Sci. Statist. Comput.}, 13(1):243--258, 1992.

\bibitem{Chatzipantelidis:1999:CVM}
Panagiotis Chatzipantelidis.
\newblock A finite volume method based on the {C}rouzeix-{R}aviart element for
  elliptic {PDE}'s in two dimensions.
\newblock {\em Numer. Math.}, 82(3):409--432, 1999.

\bibitem{Chou:2003:ADD}
S.~H. Chou and J.~Huang.
\newblock A domain decomposition algorithm for general covolume methods for
  elliptic problems.
\newblock {\em J. Numer. Math.}, 11(3):179--194, 2003.

\bibitem{eisenstat1983variational}
Stanley~C Eisenstat, Howard~C Elman, and Martin~H Schultz.
\newblock Variational iterative methods for nonsymmetric systems of linear
  equations.
\newblock {\em SIAM Journal on Numerical Analysis}, 20(2):345--357, 1983.

\bibitem{Loneland:2013:AVS}
Atle Loneland, Leszek Marcinkowski, and Talal Rahman.
\newblock Additive average {S}chwarz method for the {C}rouzeix-{R}aviart finite
  volume element discretization of elliptic problems.
\newblock Tech. Report~\, Department of Informatics, University of Bergen,
  Bergen, 2013.

\bibitem{Marcinkowski:2005:ADD}
Leszek Marcinkowski.
\newblock Additive {S}chwarz method for mortar discretization of elliptic
  problems with {$P_1$} nonconforming finite elements.
\newblock {\em BIT}, 45(2):375--394, 2005.

\bibitem{marcinkowskirahman2013}
Leszek Marcinkowski and Talal Rahaman.
\newblock Asm for general covolume discretization of symmetric elliptic
  problems.
\newblock 2013.

\bibitem{Marcinkowski:2008:NNA}
Leszek Marcinkowski and Talal Rahman.
\newblock Neumann-{N}eumann algorithms for a mortar {C}rouzeix-{R}aviart
  element for 2nd order elliptic problems.
\newblock {\em BIT}, 48(3):607--626, 2008.

\bibitem{Rahman:2005:ASM}
Talal Rahman, Xuejun Xu, and Ronald Hoppe.
\newblock Additive schwarz methods for the crouzeix-raviart mortar finite
  element for elliptic problems with discontinuous coefficients.
\newblock {\em Numerische Mathematik}, 101(3):551--572, 2005.

\bibitem{saad1986gmres}
Yousef Saad and Martin~H Schultz.
\newblock Gmres: A generalized minimal residual algorithm for solving
  nonsymmetric linear systems.
\newblock {\em SIAM Journal on scientific and statistical computing},
  7(3):856--869, 1986.

\bibitem{Sarkis:1997:NCS}
Marcus Sarkis.
\newblock Nonstandard coarse spaces and {S}chwarz methods for elliptic problems
  with discontinuous coefficients using non-conforming elements.
\newblock {\em Numer. Math.}, 77(3):383--406, 1997.

\bibitem{smith1996domain}
Barry Smith, Petter Bjorstad, and William Gropp.
\newblock {\em Domain decomposition: parallel multilevel methods for elliptic
  partial differential equations}.
\newblock Cambridge University Press, 1996.

\bibitem{Toselli:2005:DDM}
Andrea Toselli and Olof Widlund.
\newblock {\em Domain decomposition methods---algorithms and theory}, volume~34
  of {\em Springer Series in Computational Mathematics}.
\newblock Springer-Verlag, Berlin, 2005.

\bibitem{Zhang:2006:ODD}
Sheng Zhang.
\newblock On domain decomposition algorithms for covolume methods for elliptic
  problems.
\newblock {\em Comput. Methods Appl. Mech. Engrg.}, 196(1-3):24--32, 2006.

\end{thebibliography}
 \bibliographystyle{plain}
\end{document}